\DeclareFontFamily{OT1}{rsfs}{}
\DeclareFontShape{OT1}{rsfs}{n}{it}{<-> rsfs10}{}
\DeclareMathAlphabet{\mathscr}{OT1}{rsfs}{n}{it}
\newtheorem{theorem}{Theorem}[section]
\newtheorem{lemma}[theorem]{Lemma}
\newtheorem{corol}[theorem]{Corollary}
\newtheorem{prop}[theorem]{Proposition}
\newtheorem{conj}{Conjecture}
\theoremstyle{definition} \newtheorem{defin}[theorem]{Definition}}
\theoremstyle{remark} \newtheorem{remark}[theorem]{Remark}
\newtheorem{example}[theorem]{Example}
\newcommand{\Abb}{{\mathbb{A}}}
\newcommand{\Fbb}{{\mathbb{F}}}
\newcommand{\Lbb}{{\mathbb{L}}}
\newcommand{\Pbb}{{\mathbb{P}}}
\newcommand{\Qbb}{{\mathbb{Q}}}
\newcommand{\Zbb}{{\mathbb{Z}}}
\title{Stable birational equivalence and geometric Chevalley-Warning}
\author{Xia Liao}
\address{
Mathematics Department, 
Florida State University,
Tallahassee FL 32306, U.S.A.
}
\email{xliao@math.fsu.edu}
\begin{document}
\maketitle

\begin{abstract}
We propose a `geometric Chevalley-Warning' conjecture, that is a motivic extension of the Chevalley-Warning theorem in number theory. It is equivalent to a particular case of a recent conjecture of F. Brown and O.Schnetz. In this paper, we show the conjecture is true for linear hyperplane arrangements, quadratic and singular cubic hypersurfaces of any dimension, and cubic surfaces in $\Pbb^3$. The last section is devoted to verifying the conjecture for certain special kinds of hypersurfaces of any dimension. As a by-product, we obtain information on the Grothendieck classes of the affine `Potts model' hypersurfaces considered in \cite{aluffimarcolli1}.
\end{abstract}
\footnote{Keywords and phrases. Chevalley-Warning theorem, rationality, Grothendieck group.}
\footnote{2010 Mathematics Subject Classification. 14E08, 14N25, 14Q10.}

\section{introduction}

Let $\Fbb_q$ be the finite field of $q$ elements with $q$ a prime power. The Chevalley-Warning theorem states that the number of solutions in $\Fbb_q$ of a system of polynomial equations with $n$ variables is divisible by $q$, provided that the sum of the degrees of these polynomials is less than $n$. 
In \cite{brownschnetz}, \S3.3, F.~Brown and O.~Schnetz conjecture that a similar
statement holds in the Grothendieck ring of varieties over a $C_1$ field~$k$. They conjecture that the class of an affine $k$-variety defined by equations satisfying the same degree condition should be a multiple of the class $\Lbb$ of the affine line $\Abb^1_k$. Even the
`geometric' case, i.e., when $k$ is an algebraically closed field, appears to be open. We propose the following variant of
this conjecture: 

\begin{conj}[Geometric Chevalley-Warning]\label{CW}
Let $f_1,\ldots, f_l$ be homogeneous polynomials in $k[x_0,\ldots, x_n]$ such that $\sum_{i=1}^{l}deg(f_i)<n+1$, where $k$ is an algebraically closed field of characteristic $0$. Then $[Z(f_1,\ldots, f_l)] \equiv 1 (mod \ \Lbb)$ in $K_0(Var_k)$,
where $Z(f_1,\dots,f_l)$ denotes the set of zeros of $f_1,\dots,f_l$ in $\Pbb^n$.
\end{conj}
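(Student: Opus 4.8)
The plan is to work systematically modulo $\Lbb$, using the isomorphism $K_0(Var_k)/(\Lbb)\cong\Zbb[SB_k]$ of Larsen--Lunts (valid since $\mathrm{char}\,k=0$, via resolution of singularities and weak factorization), where $SB_k$ is the monoid of stable birational classes of smooth proper $k$-varieties and a smooth projective $X$ maps to its class $\langle X\rangle$. Under this identification the statement $[Z(f_1,\dots,f_l)]\equiv 1\ (\mathrm{mod}\ \Lbb)$ says exactly that the image of $[Z(f_1,\dots,f_l)]$ in $\Zbb[SB_k]$ is $\langle\mathrm{pt}\rangle$. For a smooth projective variety this image is its own stable birational class, which is trivial iff the variety is stably rational; for an arbitrary variety it is a $\Zbb$-combination of such classes read off from any presentation of its Grothendieck class by smooth projective varieties (Bittner), equivalently from a stratification into smooth pieces and smooth compactifications thereof. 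This reformulation both makes the problem tractable and isolates where the real difficulty sits.

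First I would reduce to a single hypersurface. Since $[Z(f_1,\dots,f_l)]\equiv 1$ is equivalent to $[\Pbb^n\smallsetminus Z(f_1,\dots,f_l)]\equiv 0$, and $\Pbb^n\smallsetminus Z(f_1,\dots,f_l)=\bigcup_{i=1}^l\bigl(\Pbb^n\smallsetminus Z(f_i)\bigr)$, inclusion--exclusion in $K_0(Var_k)$ gives
\[
[\Pbb^n\smallsetminus Z(f_1,\dots,f_l)]=\sum_{\emptyset\neq S\subseteq\{1,\dots,l\}}(-1)^{|S|+1}\,[\Pbb^n\smallsetminus Z(\textstyle\prod_{i\in S}f_i)],
\]
and each $\prod_{i\in S}f_i$ is a single homogeneous polynomial of degree $\le\sum_i\deg f_i<n+1$. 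So the conjecture for systems follows from the conjecture for a single (possibly reducible) hypersurface of every degree $d\le n$. In low degree a reducible hypersurface splits by a further inclusion--exclusion into components and their mutual intersections, which have strictly smaller degree or live in a smaller projective space, so for the degrees we can actually control the problem comes down to $X=Z(f)\subset\Pbb^n$ an integral hypersurface with $\deg f=d\le n$; linear arrangements are already finished here, since every inclusion--exclusion term is a projective linear subspace, whose class is $\equiv 1$.

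For such an $X$ that is \emph{singular} I would project from a singular point. If $p\in X$ has multiplicity $m\ge 2$, let $\pi\colon\mathrm{Bl}_pX\to X$ be the blow-up with exceptional divisor $E\subset\Pbb^{n-1}$ the projectivized tangent cone, a hypersurface of degree $m$; then $[X]=[\mathrm{Bl}_pX]-[E]+1$, and $\mathrm{Bl}_pX$ maps to $\Pbb^{n-1}$ by the projection, generically finite of degree $d-m$, hence birationally onto $\Pbb^{n-1}$ when $d-m=1$. Every singular cubic hypersurface has $m=2$ (or is a cone, handled directly), so $d-m=1$: stratifying the projection $\mathrm{Bl}_pX\to\Pbb^{n-1}$ (generic fibre a reduced point, positive-dimensional fibres only over the degree-$2$ locus $E$, contributing a multiple of $\Lbb$) gives $[\mathrm{Bl}_pX]\equiv[\Pbb^{n-1}]\equiv 1$, while $E\subset\Pbb^{n-1}$ is a quadric, so $[E]\equiv 1$ by the quadric case (itself proved the same way, a smooth quadric being rational and a singular one a cone over a smaller quadric). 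Hence $[X]\equiv 1-1+1=1$. This also disposes of the singular cubic surfaces in $\Pbb^3$; for a \emph{smooth} cubic surface one instead uses its classical description as $\Pbb^2$ blown up at six points, which is rational, so again $[X]\equiv 1$.

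The main obstacle is the remaining case: a \emph{smooth} hypersurface $X=Z(f)\subset\Pbb^n$ of degree $d\le n$. Such an $X$ is a Fano hypersurface of index $n+1-d\ge 1$, and for it the conjecture is precisely the assertion that $X$ is stably rational. This is classical for $d\le 2$ and for cubic surfaces, but already for the cubic threefold (unirational, yet of unknown stable rationality) it is open, and for $d$ close to $n$ it is a deep open problem; it is even conceivable that the statement fails for very general hypersurfaces of large degree, in which case the conjecture would need to be restricted. I therefore do not expect this route to settle Conjecture~\ref{CW} in full. What it does give unconditionally are the announced cases --- linear arrangements, all quadrics, all singular cubics, all cubic surfaces --- together with, by tailored stratification and fibration arguments that exploit their combinatorial structure rather than the general machinery, certain further families of hypersurfaces of arbitrary dimension, among them the affine Potts-model hypersurfaces of \cite{aluffimarcolli1}.
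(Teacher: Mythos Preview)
The statement is a conjecture that the paper does \emph{not} prove; it establishes only the special cases listed in the abstract (hyperplane arrangements, quadrics, singular cubic hypersurfaces, cubic surfaces in $\Pbb^3$). You correctly recognize this and pinpoint the obstruction: for a smooth hypersurface of degree $d\le n$ the conjecture is exactly stable rationality, which is open already for the smooth cubic threefold. So on scope you and the paper agree.

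For the provable cases your route is essentially the paper's: Larsen--Lunts to read ``$\equiv 1\pmod{\Lbb}$'' as a stable-birational statement, inclusion--exclusion to reduce to a single hypersurface, cones for degenerate pieces, projection from a singular point for singular cubics, and classical rationality for smooth quadrics and smooth cubic surfaces. The one difference worth flagging is packaging. You handle a singular cubic by blowing up a double point and stratifying the birational projection $\mathrm{Bl}_pX\to\Pbb^{n-1}$; the paper instead isolates a coordinate criterion (Lemma~\ref{special}): if $F=x_n f_k(x_0,\dots,x_{n-1})+f_{k+1}(x_0,\dots,x_{n-1})$ then $Z(F)$ is $\Lbb$-rational iff $Z(f_k)\subset\Pbb^{n-1}$ is. Your blow-up computation is precisely the geometric content of this lemma when $k=d-1$, but the paper's formulation works for any $k$ and is what powers the inductive results of \S4 on higher-degree hypersurfaces linear in all but a few variables (and hence the Potts-model application). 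Your closing allusion to ``tailored stratification and fibration arguments'' for those families is correct in spirit, but the paper gets them all from this one lemma.
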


Over a field $k$ as in this statement, Conjecture \ref{CW} is equivalent to the conjecture of Brown and Schnetz. Indeed, let $X=Z(f_1,\dots,f_l)\subseteq \Pbb^n$ , then $[X]\cdot (\Lbb-1)+1$ is the class of the zero-locus of
$f_1,\dots,f_l$ in $\Abb^{n+1}$; the Brown-Schnetz conjecture would imply that this class is 
$\equiv 0\mod \Lbb$, and this is equivalent to $[X]\equiv 1 \mod \Lbb$. 

In this paper, we show 
that Conjecture~\ref{CW}
is true 
for 
hyperplane arrangements, 
for quadratic hypersurfaces of any dimension,  
for cubic surfaces in $\Pbb^3$,
and for singular cubic hypersurfaces in any dimension.
Along the way, we also establish a 
result which settles some special cases of the conjecture in higher dimensions. The hypothesis that $k$ is an algebraically closed field of characteristic zero is used in our proofs, and it is underlying the contextual remarks that follow in this introduction.

We note that the statement of Conjecture~\ref{CW} (for any $l$) is equivalent
to the case of hypersurfaces ($l=1$). Indeed,
we have the equality $[Z(f_1,f_2)]=[Z(f_1)]+[Z(f_2)]-[Z(f_1f_2)]$ in $K_0(Var)$, and the 
condition on degrees is satisfied by polynomials on one side of the equation whenever 
it is satisfied by polynomials on the other side of the equation. It follows that the 
conjecture is true for $Z(f_1,f_2)$ as long as it is true for the hypersurfaces $Z(f_1)$, 
$Z(f_2)$, and $Z(f_1f_2)$.  The same type of considerations applies to the zero set of 
any finite numbers of polynomial equations. 

When the variety in consideration is a hypersurface, the condition on degree asked 
by the geometric Chevalley-Warning conjecture becomes $\deg(f)<n+1$. This condition 
is reminiscent of results concerning the ``weakened rationality" of varieties. 
Recall that a variety is rationally chain connected if two general points on the variety can be 
joined by a chain of rational curves. It is known that a smooth hypersurface of degree $d$ in 
$\Pbb^n$ is rationally chain connected if and only if $d<n+1$ \cite{MR1158625}. Moreover, 
if we fix the degree of the hypersurface, and make the dimension of the ambient 
projective space large enough, then it is proved that a general such hypersurface is 
unirational \cite{paranjapesrinivas}. 

Introducing the notion of $\Lbb$-rationality, Conjecture~\ref{CW} admits an equivalent reformulation. 

\begin{defin} \cite{MR2775124}
A variety is $\Lbb$-rational if its class in $K_0(Var)$ is 1 modulo $\Lbb$.
\end{defin}

\begin{conj}\label{lrat}
Every hypersurface of degree $<n+1$ in $\Pbb^n$ is $\Lbb$-rational.
\end{conj}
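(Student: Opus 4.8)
\medskip
\noindent\textbf{A strategy for Conjecture~\ref{lrat}.}
Since Conjecture~\ref{lrat} is exactly the hypersurface case ($l=1$) of Conjecture~\ref{CW}, and the latter reduces to the former by the inclusion--exclusion observation made above, I would attack~\ref{lrat} directly. The first move is to convert the relation ``$\equiv 1 \bmod \Lbb$'' into stable birational geometry: by the theorem of Larsen and Lunts, $K_0(Var_k)/(\Lbb)$ is freely generated as an abelian group by the stable birational equivalence classes of smooth projective $k$-varieties, a smooth projective $X$ mapping to its own class, and (using resolution of singularities and the blow-up relations) $[X]\bmod\Lbb$ for an arbitrary $X$ is then a signed sum of such classes read off from a stratification of $X$ into smooth pieces. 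Two consequences will do most of the work: a smooth projective hypersurface is $\Lbb$-rational precisely when it is stably rational, and every cone $C(Y)\subset\Pbb^n$ over a subvariety $Y$ satisfies $[C(Y)]=\Lbb\cdot[Y]+1\equiv 1$, so cones are always $\Lbb$-rational.

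I would then argue case by case. \emph{Hyperplane arrangements.} For $\bigcup_{i=1}^{d}H_i\subset\Pbb^n$ with $d<n+1$, expand $[\bigcup_i H_i]$ by inclusion--exclusion; each $\bigcap_{i\in I}H_i$ has dimension $\geq n-d\geq 0$, hence is a nonempty projective linear space and contributes $1$ modulo $\Lbb$, so $[\bigcup_i H_i]\equiv\sum_{\emptyset\neq I\subseteq\{1,\dots,d\}}(-1)^{|I|+1}=1$. \emph{Quadrics.} A quadric $Z(f)\subset\Pbb^n$ with $n\geq 2$ either has a singular point — then $f$ misses a variable and $Z(f)$ is a cone — or is smooth, in which case I pick $p\in Z(f)$, write $f=x_n\ell(\underline x)+b_2(\underline x)$ with $\ell\neq 0$, and note that $Z(f)\setminus\{p\}\to\Pbb^{n-1}$ has a single point over the complement of the hyperplane $Z(\ell)$ and an $\Abb^1$ over $Z(\ell,b_2)$, giving $[Z(f)]=1+\Lbb^{n-1}+\Lbb\cdot[Z(\ell,b_2)]\equiv 1$. \emph{Singular cubics.} If $Z(f)\subset\Pbb^n$ ($n\geq 3$) has a point of multiplicity $3$ it is a cone; otherwise I project from a double point $p$, writing $f=x_n b_2(\underline x)+b_3(\underline x)$, and find that $Z(f)\setminus\{p\}\to\Pbb^{n-1}$ has one point over $\Pbb^{n-1}\setminus Z(b_2)$, nothing over $Z(b_2)\setminus Z(b_2,b_3)$, and an $\Abb^1$ over $Z(b_2,b_3)$, so $[Z(f)]=1+[\Pbb^{n-1}]-[Z(b_2)]+\Lbb\cdot[Z(b_2,b_3)]\equiv 2-[Z(b_2)]\equiv 1$, the last step because the quadric $Z(b_2)\subset\Pbb^{n-1}$ was settled in the previous case. \emph{Cubic surfaces in $\Pbb^3$.} A smooth one is $\Pbb^2$ blown up at six points, so $[X]=[\Pbb^2]+6\Lbb\equiv 1$; the singular ones fall under the cubic case with $n=3$ (cones over plane cubics included, the vertex being a triple point). \emph{Special families.} For hypersurfaces with enough structure, such as the ``Potts-model'' hypersurfaces of \cite{aluffimarcolli1}, I would write down an explicit decomposition of the class into fibrations by projective spaces, cones, and quadrics and check $\equiv 1$ by hand.

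The hard part will be the smooth hypersurface $Z(f)\subset\Pbb^n$ with $3\leq\deg f\leq n$ and no special geometry: by Larsen--Lunts this is precisely the question whether such a (Fano, hence rationally connected) hypersurface is stably rational. Rational connectedness is much weaker than stable rationality, and in this range there is no cone, no evident fibration into $\Lbb$-rational varieties, and no explicit $K_0$-decomposition in sight to bridge the gap. So the plan would be to dispose of the low-degree and singular cases as above, extract whatever explicit families the last section allows, and leave the smooth high-degree case open — that gap, controlling the stable birational type of Fano hypersurfaces, is where I expect both the present approach and the original conjecture to meet the limits of current technique.
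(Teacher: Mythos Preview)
Your strategy matches the paper's: both treat Conjecture~\ref{lrat} as open in general (you correctly locate the obstruction in the smooth high-degree case, which via Larsen--Lunts is the stable rationality of Fano hypersurfaces), and both establish the same catalogue of partial cases using the same core device---projection from a chosen point of $Z(f)$ to reduce $[Z(f)]\bmod\Lbb$ to a hypersurface of lower degree, which is exactly the paper's Lemma~\ref{special}. Your inclusion--exclusion for hyperplane arrangements and your direct projection for smooth quadrics are mild variants of the paper's arguments (it handles arrangements via the cone observation and smooth quadrics via an explicit coordinate stratification, though it remarks afterward that Lemma~\ref{special} recovers the quadric case as well).
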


Conjecture~\ref{lrat} postulates that $\Lbb$-rationality behaves in a sense as rational chain connectedness
does. For instance, we know that neither cubic nor quartic smooth 
threefolds in $\Pbb^4$ are {\em rational,\/} \cite{MR0291172}, \cite{MR0302652}, while they would be both $\Lbb$-rational and rationally chain connected according to Conjecture~\ref{lrat} and previous discussion.

The notion of $\Lbb$-rationality is motivated by stable rationality. We recall that two nonsingular irreducible varieties $X$ and $Y$ are ``stably birational'' if 
$X \times \Pbb^k$ is birational to $Y \times \Pbb^l$ for some $k$ and $l$.
We say that a nonsingular, complete irreducible variety is `stably rational'
if it is stably birational to projective space. For nonsingular varieties, $\Lbb$-rationality and stable rationality are equivalent. The argument is the following. Recall that
the ideal generated by $\Lbb$ in $K_0(Var)$ has a concrete meaning in stably 
birational geometry. 
Denote by $\Zbb[SB]$ the monoid ring generated by stably birational classes of smooth
complete irreducible varieties. Then M.~Larsen and V.~Lunts prove
in \cite{MR1996804} that there exists a surjective homomorphism $\Psi_{SB} : 
K_0(Var) \rightarrow \Zbb[SB]$, 
mapping the class of a smooth complete variety in $K_0(Var)$ to its class in $\Zbb[SB]$,
and the kernel of this homomorphism is precisely $(\Lbb)$. 
Thus, a smooth projective variety is stably birational to projective space precisely
when its class in $K_0(Var)$ is $1$ modulo $\Lbb$. 

The reader should note that e.g., every {\em cone\/} is $\Lbb$-rational; cf.~Lemma~\ref{cone}. Also, according to the result we recalled above, a smooth projective rational variety is $\Lbb$-rational. However, {\em singular\/} rational varieties may well not be $\Lbb$-rational. For example, if the normalization morphism of an irreducible rational curve is not set theoretically injective, then the curve itself is not $\Lbb$-rational.  Thus, `most' singular rational curves are not $\Lbb$-rational. Examples in higher dimension may be obtained by applying Lemma~\ref{special}. While all varieties considered in this paper are ruled or rational, it is by no means obvious a priori that they should be $\Lbb$-rational as prescribed by conjecture \ref{CW}, and as we prove below.

We wrap up this discussion by noting that rationally chain connectedness admits a
description analogous to the description of stable rationality we just recalled.
In \cite{kahnsujatha}, B.~Kahn and R.~Sujatha construct a category of pure birational 
motives by localizing the category of pure motives with respect to certain classes of 
birational morphisms. They prove (\cite{kahnsujatha}, \S3.1) that if $X$ is a 
rationally chain connected smooth projective $F$-variety, then $h^\circ(X)=1$ in 
$Mot_{rat}^\circ(F,\Qbb)$. Thus, $h^\circ(X)$ plays for rational connectedness
a role analogous to the role played by the class of X in $\Zbb[SB]\cong K_0(Var)/(\Lbb)$ for stable 
rationality.

\section{A few simple cases of the conjecture}\label{simplecases}

In this section we verify that the conjecture is true 
when the degrees of the homogeneous polynomials defining the variety are low. 
Namely, we will show the following results:

\begin{prop}\label{lineareqs}
If $X$ is the union of n or fewer hyperplanes in $\Pbb^n$, then $X$ is $\Lbb$-rational.
\end{prop}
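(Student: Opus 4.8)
The plan is to compute $[X]$ directly by inclusion--exclusion in $K_0(Var_k)$ and to observe that, under the hypothesis $l\le n$, every stratum appearing in that computation is a \emph{nonempty} projective linear subspace of $\Pbb^n$, hence has class $\equiv 1\pmod{\Lbb}$. The alternating sum of these classes then collapses to $1$ by the binomial theorem, and there is essentially nothing else to do.

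In detail: write $X=H_1\cup\cdots\cup H_l$ with $l\le n$, and assume the $H_i$ pairwise distinct (discarding repetitions only lowers $l$). Iterating the scissor relation $[A\cup B]=[A]+[B]-[A\cap B]$ yields the identity
\[
[X]\;=\;\sum_{\emptyset\ne S\subseteq\{1,\dots,l\}}(-1)^{|S|-1}\Bigl[\,\bigcap_{i\in S}H_i\,\Bigr].
\]
For each nonempty $S$ the intersection $\bigcap_{i\in S}H_i$ is a linear subspace of $\Pbb^n$, so it is isomorphic to $\Pbb^{d_S}$ for an integer $d_S\ge n-|S|$; since $|S|\le l\le n$ we get $d_S\ge 0$, i.e. the intersection is nonempty. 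Hence $\bigl[\bigcap_{i\in S}H_i\bigr]=[\Pbb^{d_S}]=1+\Lbb+\cdots+\Lbb^{d_S}\equiv 1\pmod{\Lbb}$. Substituting,
\[
[X]\;\equiv\;\sum_{k=1}^{l}(-1)^{k-1}\binom{l}{k}\;=\;1-(1-1)^l\;=\;1\pmod{\Lbb},
\]
which is the claim. (Alternatively one can induct on $l$ and $n$, using that $(H_1\cup\cdots\cup H_{l-1})\cap H_l$ is a union of at most $l-1\le n-1$ hyperplanes inside $H_l\cong\Pbb^{n-1}$; but the inclusion--exclusion argument is cleaner.)

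The only step that genuinely uses the hypothesis, and hence the only thing requiring care, is the nonemptiness of the strata $\bigcap_{i\in S}H_i$: once $l$ exceeds $n$ some such intersection can be empty and contributes $0$ rather than $1\bmod\Lbb$, so the alternating sum no longer telescopes to $1$. This is not a weakness of the argument but reflects real behaviour --- for $n+1$ hyperplanes in general position in $\Pbb^n$ one computes $[X]\equiv 1+(-1)^{n+1}\pmod{\Lbb}$, which is $0$ or $2$, never $1$ --- so the bound $l\le n$ is sharp and matches the degree condition $\deg f<n+1$ of Conjecture~\ref{CW}. Everything else is formal, given only that $[\Pbb^m]=\sum_{j=0}^m\Lbb^j\equiv 1\pmod{\Lbb}$.
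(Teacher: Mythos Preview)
Your proof is correct. The paper's primary argument is precisely the induction you sketch in your parenthetical alternative: peel off $H_l$ via $[X]=[H_1\cup\cdots\cup H_{l-1}]+[H_l]-[(H_1\cup\cdots\cup H_{l-1})\cap H_l]$ and observe that the last term is a union of at most $l-1$ hyperplanes in $H_l\cong\Pbb^{n-1}$. Your main argument is simply the fully unrolled form of that induction, so the two are essentially the same idea; your presentation has the minor advantage of making the role of the hypothesis $l\le n$ (nonemptiness of all strata) completely transparent, and your remark on sharpness is a nice addition. The paper also records a second, genuinely different proof that you do not give: since $l\le n$, the $l$ hyperplanes have a common point, so $X$ is a cone, and every projective cone is $\Lbb$-rational by the join/cone lemma.
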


%
%

\begin{prop}\label{quadratic}
Any quadratic hypersurface in $\Pbb^n \ (n>1)$ is $\Lbb$-rational.
\end{prop}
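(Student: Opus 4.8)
The plan is to stratify a quadric hypersurface $Q = Z(f) \subseteq \Pbb^n$ according to the rank of the defining quadratic form $f$ and to use the fact that all the pieces turn out to be cones or Zariski-locally trivial bundles, hence $\Lbb$-rational. First I would reduce to the case of a smooth quadric: over an algebraically closed field of characteristic $0$ any quadratic form of rank $r \le n$ can be diagonalized, so $Q$ is a cone with vertex a linear space $\Pbb^{n-r}$ over a smooth quadric $Q' \subseteq \Pbb^{r-1}$. By Lemma \ref{cone} every cone is $\Lbb$-rational, so it suffices to treat the case where $f$ is nondegenerate, i.e. $Q$ is a smooth quadric of dimension $n-1 \ge 1$.

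Next I would compute $[Q]$ for $Q$ a smooth quadric directly. The cleanest route is the classical one: a smooth quadric $Q \subseteq \Pbb^n$ always contains a $k$-rational point $p$ (indeed many), and projection from $p$ gives a birational map $Q \dashrightarrow \Pbb^{n-1}$; more precisely, blowing up $p$ resolves this into a morphism whose structure one can analyze. Even more concretely, one can use the well-known Grothendieck-class formulas $[Q_{2m-1}] = [\Pbb^{m-1}] + \Lbb^m[\Pbb^{m-1}] = 1 + \Lbb + \cdots + \Lbb^{m-1} + \Lbb^m + \cdots + \Lbb^{2m-1}$ for odd-dimensional quadrics and $[Q_{2m}] = [\Pbb^m] + \Lbb^m[\Pbb^{m-1}]$ for even-dimensional ones. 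In either case the class is a polynomial in $\Lbb$ with constant term $1$, so $[Q] \equiv 1 \pmod{\Lbb}$. I would derive these formulas by the projection argument rather than quoting them: projecting a smooth $Q_d \subseteq \Pbb^{d+1}$ from a point on it expresses $[Q_d]$ in terms of $[\Pbb^{d-1}]$ and the class of the locus of lines through $p$ lying on $Q$, which is itself a quadric of one lower dimension, giving an inductive formula that visibly preserves "$\equiv 1 \bmod \Lbb$"; the base cases $n=2$ (a smooth conic $\cong \Pbb^1$) and $n=3$ (a smooth quadric surface $\cong \Pbb^1 \times \Pbb^1$, class $\Lbb^2 + 2\Lbb + 1 \equiv 1$) are immediate.

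I expect the main obstacle to be bookkeeping rather than anything deep: one must be careful that the stratification by rank is handled uniformly (a degenerate quadric of rank $r$ really is a cone, using char $0$ to diagonalize, and Lemma \ref{cone} applies to the possibly-singular cone), and that the projection-from-a-point argument is set up so that it computes a class in $K_0(Var_k)$ and not merely a birational statement — i.e. one genuinely needs the blow-up/scissor-relation version, since $\Lbb$-rationality is not a birational invariant for singular varieties. Since the hypothesis $n > 1$ rules out the empty or $0$-dimensional degenerate cases that could spoil the constant term, the argument closes. Altogether: decompose by rank, dispatch the degenerate (cone) case by Lemma \ref{cone}, and for the smooth case show $[Q]$ is a polynomial in $\Lbb$ with constant term $1$ via projection from a rational point.
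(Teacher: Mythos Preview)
Your proposal is correct and matches the paper's approach: reduce the singular case to cones via Corollary~\ref{cone}, then handle the smooth case by projecting from a rational point and running an induction on dimension in $K_0(Var)$. The paper actually offers two proofs for the smooth case---your projection/induction argument, and a one-line appeal to Lemma~\ref{rational} (Larsen--Lunts: any smooth complete rational variety is $\Lbb$-rational)---so your route coincides with the more hands-on of the two. One minor slip worth fixing: your quoted formula for $[Q_{2m}]$ has top term $\Lbb^{2m-1}$ rather than $\Lbb^{2m}$ (the correct class is $[\Pbb^{2m}]+\Lbb^{m}$), though this does not affect the constant term and hence the conclusion.
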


Proposition \ref{lineareqs} can be proved in a way similar to the reduction of the varieties in Conjecture ~\ref{CW} to hypersurfaces. In fact, the equation of $X$ can be written as $f_1\ldots f_l$ where $l$ is the numbers of hyperplanes and the $f_i$'s are all linear equations. Then $[X]=Z[f_1\ldots f_{l-1}]+Z[f_l]-Z[f_1\ldots f_{l-1}, f_l]$. The notation $Z[\ldots]$ indicates the set of common zeros of the equations appearing in the bracket, separated by commas, as mentioned in Conjecture ~\ref{CW}. The last term is the class of the union of $l-1$ hyperplanes in $\Pbb^{n-1}$. By induction, all terms on the right side of the equation are equivalent to 1 modulo $\Lbb$, so is $[X]$.
 
To prove Proposition ~\ref{quadratic}, we observe that any singular quadratic hypersurface is a cone. According to the following lemma and its corollary, the singular case can be taken care of generally, and we are left to consider the class of a nonsingular quadratic hypersurface.

\begin{lemma}\label{join}
Let Z be the join of varieties X and Y, which is obtained by connecting pairs of points from X and Y by $\Pbb^1$ and assuming these rational curves only meet at points of X or Y. If either X or Y is $\Lbb$-rational, then Z is also $\Lbb$-rational.
\end{lemma}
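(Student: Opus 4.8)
The plan is to compute the class $[Z]\in K_0(Var)$ of the join exactly, by a single cut-and-paste, and then read off the congruence modulo $\Lbb$. Concretely, I aim to prove the formula
\[
[Z]=(\Lbb-1)\,[X][Y]+[X]+[Y],
\]
from which $\Lbb$-rationality of $Z$ is immediate once $[X]\equiv 1$ or $[Y]\equiv 1$ modulo $\Lbb$.

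First I would fix the standard picture of a join: embed $X$ and $Y$ in disjoint linear subspaces $L_X,L_Y$ of a projective space $\Pbb^N$, which we may take to be spanned by $L_X\cup L_Y$, so that for $x\in X$, $y\in Y$ one always has $x\neq y$ and a well-defined join line $\overline{xy}\cong\Pbb^1$. The hypothesis that the connecting rational curves meet only along $X$ or $Y$ is precisely what forces this disjointness; it also yields $X\cap Y=\emptyset$ and, crucially, $Z\cap L_X=X$ and $Z\cap L_Y=Y$ (a point of $Z\cap L_X$ lies on some $\overline{xy}$, but $\overline{xy}\cap L_X=\{x\}$, so the point is in $X$). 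Hence $Z$ is the disjoint union of locally closed pieces $Z=\bigl(Z\setminus(X\cup Y)\bigr)\sqcup X\sqcup Y$. Introduce the incidence variety $I=\{(x,y,p)\in X\times Y\times\Pbb^N : p\in\overline{xy}\}$ with projections $\pi\colon I\to X\times Y$ and $\rho\colon I\to Z$. Since $\overline{xy}$ is always a genuine $\Pbb^1$, the map $\pi$ is the projectivization of a rank-$2$ vector bundle on $X\times Y$, hence a Zariski-locally trivial $\Pbb^1$-bundle, so $[I]=(\Lbb+1)[X][Y]$; moreover $\pi$ carries the two disjoint sections $\sigma_X(x,y)=(x,y,x)$ and $\sigma_Y(x,y)=(x,y,y)$.

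Next I would match the strata of $Z$ with pieces of $I$. The linear projections of $\Pbb^N$ away from $L_Y$ (onto $L_X$) and away from $L_X$ (onto $L_Y$) are morphisms on $\Pbb^N\setminus(L_X\cup L_Y)$, and on $Z\setminus(X\cup Y)=Z\setminus(L_X\cup L_Y)$ they send $p$ to the unique pair $(x,y)$ with $p\in\overline{xy}$; this produces a two-sided inverse to $\rho$ over $Z\setminus(X\cup Y)$, while $\rho^{-1}(X)=\sigma_X(X\times Y)$ and $\rho^{-1}(Y)=\sigma_Y(X\times Y)$. Thus $\rho$ restricts to an isomorphism between $I\setminus(\sigma_X\cup\sigma_Y)$ and $Z\setminus(X\cup Y)$, and since the two sections are disjoint,
\[
[Z\setminus(X\cup Y)]=[I]-2\,[X\times Y]=(\Lbb+1)[X][Y]-2[X][Y]=(\Lbb-1)[X][Y],
\]
so adding the two remaining strata yields the claimed formula for $[Z]$. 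Reducing modulo $\Lbb$: if $[X]\equiv 1\pmod{\Lbb}$, then $[Z]\equiv(-1)[Y]+1+[Y]=1\pmod{\Lbb}$, so $Z$ is $\Lbb$-rational, and the case $[Y]\equiv 1$ is identical by symmetry. As a sanity check, taking $X$ to be a point gives $[Z]=\Lbb[Y]+1$, the class of a cone, consistent with Lemma~\ref{cone}. I do not expect a serious obstacle: essentially all the care is in the first step — extracting from the hypothesis that $X,Y$ lie in disjoint (spanning) linear subspaces with $Z\cap L_X=X$, $Z\cap L_Y=Y$, and that $\rho$ is an isomorphism off $X\cup Y$ — after which the bookkeeping in $K_0(Var)$ is routine. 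Should one wish to bypass the claim that $\rho$ is a genuine isomorphism over $Z\setminus(X\cup Y)$, it is enough that it is a bijective morphism, which over an algebraically closed field of characteristic $0$ already preserves classes in $K_0(Var)$.
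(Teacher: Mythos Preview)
Your proof is correct and follows essentially the same route as the paper: decompose $Z$ into $X$, $Y$, and the open complement, identify the latter with a $(\Pbb^1\setminus\{2\text{ pts}\})$-bundle over $X\times Y$, and obtain $[Z]=(\Lbb-1)[X][Y]+[X]+[Y]$, from which the conclusion follows. The paper states this in a single sentence, whereas you have carefully filled in the details (the incidence variety, local triviality via projectivization of a rank-$2$ bundle, the inverse to $\rho$ via linear projections), which is a welcome elaboration rather than a different argument.
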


\begin{proof}
Taking out $X$ and $Y$ from the variety $Z$, we get a bundle over $X\times Y$ whose fiber is $\Pbb^1$ taken out 2 points. Thus the class of $Z$ in $K_0(Var)$ is $[X]\cdot [Y]\cdot (\Lbb -1)+[X]+[Y]=[X]\cdot [Y]\cdot \Lbb -([X]-1)\cdot([Y]-1)+1$.
\end{proof}
\begin{corol}\label{cone}
If the projective variety $X'\subset \Pbb^m$ is a cone over another projective variety $X \subset \Pbb^n$, $n<m$, then $X'$ is $\Lbb$-rational. 
\end{corol}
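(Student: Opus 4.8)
The plan is to exhibit the cone $X'$ as a join in the sense of Lemma~\ref{join} and then apply that lemma. Fix homogeneous coordinates on $\Pbb^m$ so that $X\subseteq\Pbb^n$ sits in the linear subspace $L=\{x_{n+1}=\cdots=x_m=0\}$, and let $V=\{x_0=\cdots=x_n=0\}\cong\Pbb^{m-n-1}$ be the complementary linear subspace; since $n<m$ we have $m-n-1\geq 0$, so $V$ is a nonempty projective space, and $X'$ is the cone over $X$ with vertex $V$, i.e. $X'=\bigcup_{x\in X}\langle V,x\rangle$. The first step is to recognize that this cone is precisely the join $J(X,V)$: because $L$ and $V$ are disjoint complementary linear subspaces, the join of a point $x\in L$ with $V$ equals the span $\langle V,x\rangle\cong\Pbb^{m-n}$, hence $\bigcup_{x\in X,\,v\in V}\overline{xv}=\bigcup_{x\in X}\langle V,x\rangle=X'$.

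Next I would verify the "nicely meeting" hypothesis of Lemma~\ref{join}, namely that the ruling lines $\overline{xv}$ meet only along $X\cup V$, so that $X'\setminus(X\cup V)$ is a Zariski-locally trivial $(\Pbb^1\setminus\{2\text{ pts}\})$-bundle over $X\times V$. The clean way to see this is via the linear projection $\pi\colon\Pbb^m\setminus V\to\Pbb^n$ away from $V$: it is constant equal to $x$ on $\langle V,x\rangle\setminus V$, so each $z\in X'\setminus(X\cup V)$ satisfies $\pi(z)\in X$ and lies on the unique line through $\pi(z)$ and $z$, which meets $V$ in a unique point; and two distinct lines $\overline{x_1v_1},\overline{x_2v_2}$ of the family share a point only if $x_1=x_2$ (the point then lying in $X$) or $v_1=v_2$ (the point then lying in $V$), since a common point off $V$ would force equal images under $\pi$. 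This is exactly what Lemma~\ref{join} requires.

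Finally, observe that $Y:=V\cong\Pbb^{m-n-1}$ is $\Lbb$-rational, because $[\Pbb^{m-n-1}]=1+\Lbb+\cdots+\Lbb^{m-n-1}\equiv 1\pmod{\Lbb}$ in $K_0(Var)$. Applying Lemma~\ref{join} with $X$ and this $Y$ then yields $[X']=[X]\cdot[V]\cdot\Lbb-([X]-1)([V]-1)+1\equiv 1\pmod{\Lbb}$, so $X'$ is $\Lbb$-rational, as claimed. I do not anticipate a real obstacle; the only point needing a little care is the bookkeeping in the second step that identifies the classical cone with the join of Lemma~\ref{join}, and the degenerate extreme $m=n+1$, where $V$ is a single point and $X'$ is an ordinary cone with a vertex point, is handled by the very same argument with $[V]=1$.
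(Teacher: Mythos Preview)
Your argument is correct and is exactly the intended one: the paper states Corollary~\ref{cone} immediately after Lemma~\ref{join} without separate proof, the implicit argument being precisely that a cone is the join of $X$ with its vertex $V\cong\Pbb^{m-n-1}$, which is $\Lbb$-rational. Your careful verification that the ruling lines meet only along $X\cup V$ (so that the hypothesis of Lemma~\ref{join} holds) fills in the detail the paper leaves to the reader.
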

 
\begin{remark}
The union of $n$ or fewer hyperplanes in $\Pbb^n$ is a cone (over an arbitrary point in the set of the intersection). Thus we get a new proof of Propostion ~\ref{lineareqs}.
\end{remark} 

A special case of the result by Larsen-Lunts mentioned in the introduction gives an effective treatment of the Chevalley-Warning problem for nonsingular quadratic hypersurfaces, as 

\begin{lemma}~\label{rational}
Every rational smooth complete variety is $\Lbb$-rational.
\end{lemma}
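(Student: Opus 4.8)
The plan is to deduce the lemma directly from the theorem of Larsen and Lunts recalled in the introduction, which furnishes a ring homomorphism $\Psi_{SB}\colon K_0(Var_k)\to\Zbb[SB]$ sending the class of a smooth complete variety to its stably birational class and having kernel exactly $(\Lbb)$. First I would observe that a smooth complete rational variety $X$ over $k$ is automatically stably birational to projective space: if $X$ is birational to $\Pbb^n$, then $X\times\Pbb^0$ is birational to $\Pbb^n\times\Pbb^0$, so $X$ and $\Pbb^n$ represent the same element of $SB$. Since $\Pbb^n$ is itself rational, this common stably birational class is the class of a point, i.e.\ the unit $1$ of the monoid ring $\Zbb[SB]$.

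Consequently $\Psi_{SB}([X])=1$ in $\Zbb[SB]$. Because $\Psi_{SB}$ is a ring homomorphism and $\Psi_{SB}([\mathrm{pt}])=1$ as well, we get $\Psi_{SB}([X]-1)=0$, hence $[X]-1$ lies in $\ker\Psi_{SB}=(\Lbb)$. That is, $[X]\equiv 1\pmod{\Lbb}$ in $K_0(Var_k)$, which is exactly the assertion that $X$ is $\Lbb$-rational.

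The only point needing any care is formal: $\Psi_{SB}$ is defined on all of $K_0(Var_k)$, but it is characterized through its values on smooth complete varieties, and its construction invokes resolution of singularities (hence the standing hypothesis that $\mathrm{char}\,k=0$). Since the $X$ in the statement is itself smooth and complete, no resolution is needed in the argument above, and the entire computation takes place among classes of smooth complete varieties where $\Psi_{SB}$ is given by the explicit recipe. I therefore do not expect any genuine obstacle here: the lemma is in essence a reformulation of the cited result, recorded separately because it is the mechanism by which Proposition~\ref{quadratic} will be finished off for nonsingular quadrics (which are rational), the singular quadrics having already been handled as cones via Corollary~\ref{cone}.
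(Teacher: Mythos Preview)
Your argument is correct and is exactly the paper's own proof, just written out in more detail: the paper simply observes that a rational smooth complete variety has the same stably birational class as a point, and then invokes Larsen--Lunts to conclude that $[X]-1\in(\Lbb)$. Your additional remarks on the role of characteristic zero and the intended application to nonsingular quadrics are accurate but go beyond what the paper records.
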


\begin{proof}
A rational smooth complete variety has the same stable birational class as a point. Thus the difference of its class in $K_0(Var)$ and 1 is in the ideal generated by $\Lbb$. \cite{MR1996804}
\end{proof}

The previous lemmas settle the Chevalley-Warning problem for quadratic hypersurfaces. However, we can give another proof for the nonsingular one avoiding using Lemma~\ref{rational}. Let $Q$ be a nonsingular quadratic hypersurface. The projection from the ``north pole'' gives a nice birational map between $Q$ and $\Abb^n$ which allows us to stratify $Q$.

\begin{proof}[Proof of Proposition~\ref{quadratic} in the nonsingular case]

First, let's fix some notations. Let $Q_n$ be the nonsingular quadratic hypersurface in 
$ \Pbb^{n+1} $ defined by the equation $ X^2_0+X^2_1+\ldots+X^2_{n+1}=0 $ 
and $Y_n$ be the affine variety defined by $ \sum_{i=1}^{n+1} y^2_i=1 $ in 
$ \Abb^{n+1} $.
  
Projecting from the point $ P=(0,0,\ldots,1) $, we can establish a birational map between $ Y_n $ and $ \Abb^n $. The formula is given by:
  \begin{equation*}
  x_i=-\frac {y_i}{y_{n+1}-1}
  \end{equation*}
  Here, the $x_i$'s are coordinates of the affine space $ \Abb^n $.
  
  The inverse rational map from $ \Abb^n $ to $Q_n$ is given by the formula:
  \begin{equation*}
  y_i=\frac {2x_i}{\sum_{i=1}^{n} x^2_i +1} \quad
  y_{n+1}=\frac {\sum_{i=1}^{n} x^2_i -1}{\sum_{i=1}^{n} x^2_i +1}
  \end{equation*}
  
  From this description, it is easy to see the closed set $Z(\sum x^2_i +1) $ is not in the image of the polar projection. Then we have the following relation in $ K_0(Var) $:
  \begin{equation}
  [Y_n]-[Z(\sum_{i=1}^n y^2_i)]=[\Abb^{n}]-[Y_{n-1}]
  \end{equation}
  In addition to this relation, we also have the trivial relation
  \begin{equation}
  [Q_n]=[Q_{n-1}]+[Y_n]
  \end{equation}
  Because the variety $ Z(\sum_{i=1}^n y^2_i) $ is a cone, by the proof of ~\ref{cone}, we get $ Z(\sum_{i=1}^n y^2_i)$ is equal to $ 1+(\Lbb-1)[Q_{n-2}] $ in $ K_0(Var) $. So we can replace our first equation by:
  \begin{equation}
  [Y_n]-(1+(\Lbb-1)[Q_{n-2}])=[\Abb^n]-[Y_{n-1}]
  \end{equation}
  With the last two equations and the simple cases $ [Q_1]=\Lbb+1 $, $ [Y_0]=2 $, $ [Y_1]=\Lbb-1 $, we can conclude by an induction on dimension that $ [Q_n] \equiv 1\ (mod \ \Lbb) $ when $ n>0 $ and $ [Y_n] \equiv 0 \ (mod \ \Lbb) $ when $n>1$.
\end{proof}

\section{cubic hypersurfaces}\label{cubic}
The next easiest case to consider is the variety defined by a cubic equation in $ \Pbb^3 $. We have the following theorem:
\begin{theorem}\label{cubic}
Any cubic surface in $ \Pbb^3 $ is $\Lbb$-rational.
\end{theorem}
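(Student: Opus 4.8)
The plan is to reduce to the smooth case by the same mechanism used for quadrics, and then to exploit the classical fact that a smooth cubic surface over an algebraically closed field of characteristic zero is rational. If $S=Z(f)\subset\Pbb^3$ is a singular cubic surface, I would distinguish two situations. If $S$ is a cone, then Corollary~\ref{cone} immediately gives that $S$ is $\Lbb$-rational. If $S$ is not a cone but is singular, pick a singular point $p\in S$ and project from $p$: this gives a birational map $S\dashrightarrow\Pbb^2$ (a line through a node or worse meets the residual cubic in at most one further point), which exhibits $S$ as rational. Since a blow-up resolution of $S$ is smooth, projective and rational, Lemma~\ref{rational} applies to the resolution, but this is not quite enough: I need to control the Grothendieck class of $S$ itself, not just of a smooth model. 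So instead I would compute $[S]$ directly by stratifying via the projection from $p$, comparing the locus where the projection is defined and injective with its complement (the lines in $S$ through $p$, and the exceptional behaviour over them), exactly in the spirit of the stratification of $Q_n$ carried out in the proof of Proposition~\ref{quadratic}.

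For the smooth case, let $S\subset\Pbb^3$ be a nonsingular cubic surface. Over an algebraically closed field of characteristic zero, $S$ is the blow-up of $\Pbb^2$ at six points in general position, hence $S$ is a smooth complete rational variety; by Lemma~\ref{rational}, $[S]\equiv 1\pmod{\Lbb}$ in $K_0(Var)$. This disposes of the smooth case with essentially no computation, using only the Larsen--Lunts result already invoked in the excerpt.

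It remains to organize the singular case so that the class of $S$ — rather than of a resolution — is pinned down. I would proceed as follows: write $[S]=[U]+[S\setminus U]$, where $U$ is the open subset on which projection from the singular point $p$ is an isomorphism onto its image in $\Pbb^2$; then $[U]$ differs from $[\Abb^2]$ or $[\Pbb^2]$ by lower-dimensional pieces (the images and preimages of the finitely many lines in $S$ through $p$, together with the point $p$ itself and possibly a conic contracted by the projection), and each such piece is a union of rational curves, points, and cones, all of which are $\Lbb$-rational by Lemma~\ref{cone}, Lemma~\ref{rational}, and the curve case noted in the introduction. Summing, $[S]\equiv[\Pbb^2]\equiv 1\pmod{\Lbb}$. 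The main obstacle is bookkeeping in this last step: a singular cubic surface can have several types of singularities (nodes, $A_n$, $D_n$, $E_6$, or even non-normal double lines), and one must check that in every case the correction terms in the stratification are built only from varieties already known to be $\Lbb$-rational, and that the projection from $p$ really is birational (which requires that $S$ not be a cone with vertex $p$, the case already handled separately). Once the list of singularity types is dispatched — most efficiently by treating the non-normal case by hand and, for normal singular cubic surfaces, by noting that projection from a singular point always resolves to a del Pezzo-type rational picture with explicitly rational strata — the result follows.
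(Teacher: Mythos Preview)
Your treatment of the smooth case is identical to the paper's: a smooth cubic surface is the blow-up of $\Pbb^2$ at six points, hence rational, and Lemma~\ref{rational} finishes. The cone case is also the same.

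For the non-cone singular case your geometric idea (project from a singular point) is exactly the one the paper uses, but the paper packages it far more cleanly and you have not actually carried it out. After moving the singular point to $(0{:}0{:}0{:}1)$, the equation takes the form $x_3 f_2(x_0,x_1,x_2)+f_3(x_0,x_1,x_2)=0$. The paper isolates this as a general lemma (its Lemma~\ref{special}): for $F=x_n f_k+f_{k+1}$, the locus $\{f_k=0\}$ on $Z(F)$ is a cone with vertex $(0{:}\cdots{:}0{:}1)$, while its complement is isomorphic to $\Pbb^{n-1}\smallsetminus Z(f_k)$ via $(x_0,\dots,x_{n-1})\mapsto(x_0,\dots,x_{n-1},-f_{k+1}/f_k)$. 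Hence $[Z(F)]\equiv 2-[Z(f_k)]\pmod{\Lbb}$, so $Z(F)$ is $\Lbb$-rational iff $Z(f_k)\subset\Pbb^{n-1}$ is. For the singular cubic surface this reduces the question to the $\Lbb$-rationality of the conic $\{f_2=0\}\subset\Pbb^2$, already established in Proposition~\ref{quadratic}. This works uniformly: no enumeration of singularity types ($A_n$, $D_n$, $E_6$, non-normal, \dots) is needed, and the lemma immediately yields the higher-dimensional statement (Theorem~\ref{scubic}) as well.

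Two further remarks on your sketch. First, what you call ``bookkeeping'' is in fact the whole content of the singular case, and you have left it undone; the paper's lemma shows that once the equation is written in the above normal form the bookkeeping is a two-line computation. Second, your appeal to ``the curve case noted in the introduction'' is dangerous: the introduction explicitly warns that singular rational curves need \emph{not} be $\Lbb$-rational, so you cannot invoke $\Lbb$-rationality of arbitrary rational-curve strata without further argument. In the clean decomposition above this issue never arises, since the only strata are a cone and an open piece of $\Pbb^{n-1}$.
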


In fact, we can prove something more:
\begin{theorem}\label{scubic}
Any singular cubic hypersurface in $\Pbb^n \ (n\geqslant 3)$ is $\Lbb$-rational.
\end{theorem}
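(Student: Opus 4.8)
The plan is to exploit the singularity of the cubic hypersurface $X=Z(f)\subseteq\Pbb^n$ via projection from a singular point, exactly as the rational parametrization of a quadric was used in the proof of Proposition~\ref{quadratic}. Let $p\in X$ be a singular point; after a linear change of coordinates assume $p=(0:\cdots:0:1)$. Writing $f$ in terms of the last variable, $f = x_n^0\,c(x_0,\dots,x_{n-1}) + x_n\,q(x_0,\dots,x_{n-1})$, where the absence of an $x_n^2$ term (and of an $x_n^3$ term, since $f$ is not a function of $x_n$ alone unless $X$ is a cone, handled by Corollary~\ref{cone}) reflects that $p$ is a singular point: here $q$ is a quadratic form and $c$ a cubic form in the variables $x_0,\dots,x_{n-1}$. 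The projection away from $p$ defines a rational map $\pi\colon X\dashrightarrow \Pbb^{n-1}$.

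First I would describe the fibers of $\pi$ set-theoretically. A line through $p$ either lies entirely in $X$ (when the point $(x_0:\cdots:x_{n-1})\in\Pbb^{n-1}$ it determines satisfies $c=q=0$) or meets $X$ residually in a single further point (when $q\ne0$ there, the equation $x_n q + c = 0$ is linear in $x_n$) — with the degenerate behavior occurring exactly over $Z(q)\subseteq\Pbb^{n-1}$. So away from $Z(q)$ the map $\pi$ is a bijection onto its image, while over $Z(c,q)$ the whole line maps to a point. Next I would assemble this into a class computation in $K_0(Var_k)$: stratify $\Pbb^{n-1}$ into $Z(q)$ and its complement $U=\Pbb^{n-1}\setminus Z(q)$, and correspondingly stratify $X$ into the part lying over $U$, over $Z(q)\setminus Z(c)$, and over $Z(c,q)$. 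Over $U$, $\pi$ is an isomorphism onto $U$; over $Z(q)\setminus Z(c)$ the fiber is empty (the line through $p$ meets $X$ only at $p$); over $Z(c,q)$ the fiber is a full $\Pbb^1$, i.e. this piece is an $\Abb^1$-bundle over the base $Z(c,q)$ together with the extra point $p$. Collecting terms, $[X]$ becomes a $\Zbb[\Lbb]$-combination of $[U]$, $[\Pbb^{n-1}]$, and $[Z(c,q)]$.

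Then I would finish by the degree bookkeeping that has already appeared twice in the paper. Since $q$ is a quadric in $\Pbb^{n-1}$, Proposition~\ref{quadratic} gives $[Z(q)]\equiv1\pmod\Lbb$, hence $[U]=[\Pbb^{n-1}]-[Z(q)]\equiv 0\pmod\Lbb$, and $[\Pbb^{n-1}]\equiv1\pmod\Lbb$. The base of the bundle piece, $Z(c,q)\subseteq\Pbb^{n-1}$, is cut out by one cubic and one quadric, so $\deg c+\deg q = 5$; this is a priori too large for the Chevalley-Warning bound in $\Pbb^{n-1}$, so I cannot simply invoke the conjecture inductively. Instead I would project $Z(c,q)$ once more — it is a cubic-and-quadric complete intersection that always contains a line or a singular point — reducing its class modulo $\Lbb$ to quadric and hyperplane-section data that do satisfy the bound, or more economically observe that in the combination produced above the coefficient of $[Z(c,q)]$ is a multiple of $(\Lbb-1)$ (since that stratum comes with an $\Abb^1$-fiber minus the empty fiber, i.e. an $\Lbb-1$), so its contribution is automatically $\equiv0\pmod\Lbb$ regardless of $[Z(c,q)]$. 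The main obstacle is precisely pinning down this last point: making sure every stratum in the decomposition of $[X]$ that is not manifestly $\equiv0$ or $\equiv1\pmod\Lbb$ is attached to a factor of $\Lbb$ or of $\Lbb-1$, and handling carefully the exceptional locus where $q$ and $c$ vanish simultaneously on the line through $p$ (and the sub-case where $X$ itself is a cone, which is already covered by Corollary~\ref{cone}).
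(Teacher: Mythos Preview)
Your approach is essentially the paper's: project from the singular point $p$, observe that away from the quadric $Z(q)\subset\Pbb^{n-1}$ the projection is an isomorphism onto its image, and that over $Z(c,q)$ the fibre is a full line. The paper packages exactly this computation into Lemma~\ref{special}, which says that $Z(x_nf_k+f_{k+1})$ is $\Lbb$-rational iff $Z(f_k)\subset\Pbb^{n-1}$ is; applied with $k=2$ this reduces Theorem~\ref{scubic} in one line to Proposition~\ref{quadratic} (and the cone case $q\equiv 0$ to Corollary~\ref{cone}).

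There is one slip in your bookkeeping. You claim the coefficient of $[Z(c,q)]$ in the class formula for $[X]$ is a multiple of $\Lbb-1$, and conclude that this term vanishes modulo~$\Lbb$. But $\Lbb-1\equiv -1\pmod\Lbb$, so that conclusion would not follow. In fact the coefficient is $\Lbb$, not $\Lbb-1$: after removing the vertex $p$, the part of $X$ lying over $Z(c,q)$ is an $\Abb^1$-bundle, contributing $\Lbb\cdot[Z(c,q)]$; there is no ``minus the empty fibre'' correction to make. With the correct coefficient the term really is $\equiv 0\pmod\Lbb$, and the computation
\[
[X]=1+[U]+\Lbb\,[Z(c,q)]\equiv 1+[\Pbb^{n-1}]-[Z(q)]\equiv 1\pmod\Lbb
\]
goes through via Proposition~\ref{quadratic}. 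This is precisely the content of the proof of Lemma~\ref{special} (the cone there has class $1+\Lbb\,[Z(f_k,f_{k+1})]$), so your detour through a second projection of $Z(c,q)$ is unnecessary.
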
 

The following lemma helps to analyze singular cubic hypersurfaces.

\begin{lemma}\label{special}
Let $X\subseteq \Pbb^n$ have equation
$F=x_n f_k(x_0,\ldots,x_{n-1})+f_{k+1}(x_0,\ldots,x_{n-1}) =0$ where $f_k$ and 
$f_{k+1}$ are homogeneous polynomials of degree $k$ and $k+1$ respectively. 
$X$ is $\Lbb$-rational if and only if the variety in $\Pbb^{n-1}$ defined by $f_k= 0$ is $\Lbb$-rational.
\end{lemma}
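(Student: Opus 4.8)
The idea is to stratify $X$ according to the vanishing of $f_k$ on the hyperplane $H = \{x_n = 0\} \cong \Pbb^{n-1}$. Write $V = Z(f_k) \subseteq \Pbb^{n-1}$ and $W = Z(f_k, f_{k+1}) \subseteq \Pbb^{n-1}$. A point of $X$ is either in $H$ (where the equation becomes $f_{k+1}(x_0,\dots,x_{n-1}) = 0$, i.e. the point lies in $Z(f_{k+1}) \subseteq \Pbb^{n-1}$), or it has $x_n \neq 0$, in which case we may normalize $x_n = 1$ and the equation reads $f_k(x_0,\dots,x_{n-1}) = -f_{k+1}(x_0,\dots,x_{n-1})$ as an affine condition on $(x_0,\dots,x_{n-1}) \in \Abb^n$. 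So first I would split $X = (X \cap H) \sqcup (X \setminus H)$ and compute $[X]$ as the sum of the two classes in $K_0(Var)$.

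For the affine part $X \setminus H$, I would project to the $\Abb^n$ of $(x_0,\dots,x_{n-1})$-coordinates: over the open set $\{f_k \neq 0\}$ this projection is a bijection, since $x_n = -f_{k+1}/f_k$ is uniquely determined; while over the closed set $\{f_k = 0\} \subseteq \Abb^n$ the fiber is the affine line $\{(x_0,\dots,x_{n-1})\} \times \Abb^1$ when $f_{k+1} = 0$ as well, and is empty otherwise. Let $C(V) \subseteq \Abb^n$ denote the affine cone over $V \subseteq \Pbb^{n-1}$ and $C(W)$ the affine cone over $W$. Then $[X \setminus H] = [\Abb^n] - [C(V)] + [C(W)]\cdot \Lbb$, with the $-[C(V)]$ removing the locus where the section fails to exist and $[C(W)]\cdot\Lbb$ adding back the $\Abb^1$-fibers. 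For the part inside $H$, note $X \cap H = Z(f_{k+1}) \subseteq \Pbb^{n-1}$, so $[X \cap H] = [Z(f_{k+1})]$. Adding, and using the standard relations between the class of a projective variety and the class of its affine cone (namely $[C(Y)] = (\Lbb - 1)[Y] + 1$ for $Y \subseteq \Pbb^{m}$), I would express $[X]$ as an explicit polynomial in $\Lbb$, $[V]$, $[W]$, and $[Z(f_{k+1})]$; then reduce modulo $\Lbb$.

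The key cancellation should be this: modulo $\Lbb$ the affine cone relation gives $[C(Y)] \equiv 1 - [Y] \pmod{\Lbb}$, so the contribution of $C(W)\cdot\Lbb$ dies, $-[C(V)] \equiv [V] - 1$, and $[\Abb^n] \equiv 0$. Meanwhile one also has a relation tying $[Z(f_{k+1})]$ to $[V]$ and $[W]$ coming from the decomposition of $Z(f_{k+1})\subseteq\Pbb^{n-1}$ along $V$: namely $[Z(f_{k+1})] = [W] + \bigl([Z(f_{k+1})] - [W]\bigr)$, where $Z(f_{k+1})\setminus W$ sits inside $\Pbb^{n-1}\setminus V$. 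I expect everything to telescope so that $[X] \equiv 1 \pmod{\Lbb}$ if and only if $[V] \equiv 1 \pmod{\Lbb}$, which is exactly the claim. The main obstacle is bookkeeping the auxiliary loci correctly — in particular making sure the stratification of the affine part is genuinely a disjoint union of locally closed pieces (so the scissor relations in $K_0(Var)$ apply) and that the terms involving $[Z(f_{k+1})]$ and $[W]$ enter with the right signs so that they cancel rather than obstruct. Once the algebra is set up honestly, the reduction mod $\Lbb$ is immediate.
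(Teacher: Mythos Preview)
Your stratification has a genuine error. Once you pass to the chart $x_n=1$, the set $X\setminus H$ is the affine hypersurface $\{f_k(y)+f_{k+1}(y)=0\}\subseteq\Abb^n$ in the variables $y_i=x_i/x_n$; there is no remaining $x_n$ to solve for, so the sentence ``over $\{f_k\neq 0\}$ the projection is a bijection since $x_n=-f_{k+1}/f_k$'' is describing a \emph{different} map --- projection of $X$ from the point $P=(0:\cdots:0:1)$ to $\Pbb^{n-1}$ --- not a fibering of $X\setminus H$ over $\Abb^n$. In fact the expression you wrote, $[\Abb^n]-[C(V)]+[C(W)]\cdot\Lbb$, is the class of the full affine cone $\hat X\subseteq\Abb^{n+1}$ (project $(x_0,\dots,x_n)\mapsto(x_0,\dots,x_{n-1})$), not of $X\setminus H$. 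Plugging it into $[X]=[X\cap H]+[X\setminus H]$ gives the wrong answer: with $n=2$, $f_1=x_0$, $f_2=x_0x_1$ (so $F=x_0(x_1+x_2)$ and $[X]=2\Lbb+1\equiv 1\pmod\Lbb$), your recipe yields $[Z(f_2)]+\Lbb^2-[C(V)]+[C(W)]\Lbb = 2+\Lbb^2-\Lbb+\Lbb^2\equiv 2\pmod\Lbb$. The hoped-for cancellation of $[Z(f_{k+1})]$ does not occur, because there is no general relation binding $[Z(f_{k+1})]$, $[V]$, $[W]$ modulo $\Lbb$.

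The paper's argument avoids all of this by stratifying $X$ along $\{f_k=0\}$ rather than along $H$. The locus $X\cap\{f_k=0\}$ in $\Pbb^n$ is a cone with vertex $P$, hence $\equiv 1\pmod\Lbb$ by Corollary~\ref{cone}; and the map $(x_0:\cdots:x_{n-1})\mapsto(x_0:\cdots:x_{n-1}:-f_{k+1}/f_k)$ is an isomorphism from $\Pbb^{n-1}\setminus V$ onto $X\setminus\{f_k=0\}$. Hence $[X]\equiv 1+([\Pbb^{n-1}]-[V])\equiv 2-[V]\pmod\Lbb$, so $[X]\equiv 1$ iff $[V]\equiv 1$. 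No auxiliary loci $W$ or $Z(f_{k+1})$ enter, and no telescoping is needed.
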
 

\begin{proof}[Proof of Lemma ~\ref{special}]
On the hypersurface $Z(F)$, the equation $f_k=0$ defines a cone over the point 
$(0,\ldots,1)$. By Corollary~\ref{cone}, this subvariety of the hypersurface $Z(F)$ is 
$\Lbb$-rational. On the other hand, we have the 
isomorphism between the affine open set $f_k \neq 0$ in $\Pbb^{n-1}$ and $Z(F)-Z(f_k)$ provided by $(x_0,\ldots,x_{n-1}) 
\rightarrow (x_0,\ldots,x_{n-1},-\frac{f_{k+1}}{f_k})$. We see the $\Lbb$-rationality of $X$ is equivalent to the condition that the class of the affine open set $f_k \neq 0$ is in the ideal generated by $\Lbb$. This happens if and only if the hypersurface $f_k=0$ in $\Pbb^{n-1}$ is $\Lbb$-rational.  
\end{proof}

\begin{proof}[Proof of theorem~\ref{scubic}]
For a singular cubic hypersurface, we can assume one of its singular point is $(0:\ldots:0:1)$ by a change of projective coordinates. We keep using the notation of the previous lemma. The equation of the cubic surface is written as $x_n f_2(x_0,\ldots,x_{n-1})+f_3(x_0,\ldots,x_{n-1})=0$ or $f_3(x_0,\ldots,x_{n-1})=0$, depending on the singular point is a double point or a triple point. Then the $\Lbb$-rationality of the singular cubic hypersurface follows immediately from lemma ~\ref{special}, proposition ~\ref{quadratic} or corollary ~\ref{cone}.
\end{proof}

\begin{proof}[Proof of theorem~\ref{cubic}]
A nonsingular cubic surface arises from the blow-up of $\Pbb^2$ at 6 general points. 
In particular, it is rational. The $\Lbb$-rationality follows from Lemma~\ref{rational}.
The singular case has been done by theorem~\ref{scubic}.
\end{proof}

\begin{remark}
One can also approach theorem ~\ref{cubic} by the classification of cubic surfaces given in \cite{MR533323}. That is, one can directly compute their class in $K_0(Var)$ according to the standard equations given in this reference. This however leads to a lengthy computation.
\end{remark}

\begin{remark}
The criterion we derived in lemma \ref{special} can be applied to give another proof of proposition \ref{quadratic}. Since as long as the rank of the quadratic form is greater or equal to 2, the equation can be written as $F=x_0 x_1 + x_2^2 + \ldots$.
\end{remark} 

\begin{corol}\label{quartic}
If a singular quartic hypersurface in $\Pbb^4$ has a triple point, then it is $\Lbb$-rational. 
\end{corol}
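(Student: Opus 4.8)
The plan is to reduce the statement directly to Theorem~\ref{cubic} by means of Lemma~\ref{special}. First I would apply a linear change of projective coordinates on $\Pbb^4$ that moves the assumed triple point to the vertex $(0:0:0:0:1)$. Expanding the defining quartic form $F$ in powers of $x_4$, I can write $F=\sum_{j=0}^{4}x_4^{\,j}\,g_{4-j}(x_0,\ldots,x_3)$ with each $g_i$ homogeneous of degree $i$ in $x_0,\ldots,x_3$. The multiplicity of $Z(F)$ at $(0:0:0:0:1)$ is the least index $i$ with $g_i\neq 0$, so the hypothesis that this point is a point of multiplicity at least three forces $g_0=g_1=g_2=0$; hence $F=x_4\,g_3(x_0,\ldots,x_3)+g_4(x_0,\ldots,x_3)$.

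Next I would split into two cases. If $g_3\equiv 0$, then $Z(F)$ is the cone over the quartic surface $Z(g_4)\subseteq\Pbb^3$ with vertex $(0:0:0:0:1)$, and it is $\Lbb$-rational by Corollary~\ref{cone}. Otherwise $g_3\not\equiv 0$ and $F$ has exactly the form required by Lemma~\ref{special}, with $n=4$, $k=3$, $f_k=g_3$ and $f_{k+1}=g_4$; that lemma then reduces the $\Lbb$-rationality of $Z(F)$ to the $\Lbb$-rationality of the cubic hypersurface $Z(g_3)\subseteq\Pbb^3$.

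To finish, I would invoke Theorem~\ref{cubic}: every cubic surface in $\Pbb^3$ is $\Lbb$-rational, so $Z(g_3)$ is, and therefore so is $Z(F)$. I do not expect any genuine obstacle in this argument; the only steps needing a little attention are checking that placing the triple point at a coordinate vertex really does put $F$ into the normal form of Lemma~\ref{special}, and disposing of the degenerate case in which the ``triple point'' is in fact a point of multiplicity four.
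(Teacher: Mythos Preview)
Your proposal is correct and follows exactly the approach of the paper: move the triple point to $(0:0:0:0:1)$, write $F=x_4 f_3+g_4$, and then apply Lemma~\ref{special} together with Theorem~\ref{cubic}. The only difference is cosmetic: you spell out the expansion in powers of $x_4$ and explicitly treat the degenerate possibility $g_3\equiv 0$ via Corollary~\ref{cone}, whereas the paper states ``triple point'' (so $g_3\not\equiv 0$) and omits this case.
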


\begin{proof}
Assuming the triple point is $(0:0:0:0:1)$, then the equation of the quartic hypersuface can be written as $F=x_4 f_3(x_0,x_1,x_2,x_3) + g_4(x_0,x_1,x_2,x_3)$. The $\Lbb$-rationality follows immediately from lemma~\ref{special} and theorem~\ref{cubic}.
\end{proof}

\section{$\Lbb$-rationality of higher dimensional varieties}

\begin{theorem}
If the equation of a hypersurface of degree $n$ in $\Pbb^m \ (m\geqslant n\geqslant4)$ can be written as $F=x_n \ldots x_4 f_3(x_0, x_1,x_2,x_3) + \sum_{i=5}^{n}x_n\ldots x_{i} g_{i-1}(x_0,\ldots,x_{i-2})+g_n(x_0,\ldots,x_{n-1})$, then this hypersurface is $\Lbb$-rational.
\end{theorem}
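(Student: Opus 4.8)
The plan is to reduce immediately to the case $m=n$ and then to induct on $n\geq 4$, at each step stripping off the last variable $x_n$ by means of Lemma~\ref{special}.

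Since the polynomial $F$ involves only the variables $x_0,\ldots,x_n$, whenever $m>n$ the hypersurface $Z(F)\subseteq\Pbb^m$ is the cone, with vertex the linear subspace $\{x_0=\cdots=x_n=0\}$, over the hypersurface of $\Pbb^n$ defined by the same equation; Corollary~\ref{cone} then gives its $\Lbb$-rationality. So from now on I assume $m=n$ and argue by induction on $n$.

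For the base case $n=4$ the defining equation is simply $F=x_4f_3(x_0,x_1,x_2,x_3)+g_4(x_0,x_1,x_2,x_3)$, the sum $\sum_{i=5}^{4}$ being empty; by Lemma~\ref{special} with $k=3$ the hypersurface is $\Lbb$-rational if and only if the cubic surface $Z(f_3)\subseteq\Pbb^3$ is, and this is Theorem~\ref{cubic} (indeed this case is exactly Corollary~\ref{quartic}). For the inductive step I factor $x_n$ out of the first term and out of each term of the sum, every one of which contains $x_n$, obtaining
\[
F=x_n\,h(x_0,\ldots,x_{n-1})+g_n(x_0,\ldots,x_{n-1}),\qquad
h=x_{n-1}\ldots x_4\,f_3+\sum_{i=5}^{n-1}x_{n-1}\ldots x_i\,g_{i-1}+g_{n-1}(x_0,\ldots,x_{n-2}),
\]
where the term $g_{n-1}(x_0,\ldots,x_{n-2})$ is what survives of the original $i=n$ summand. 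The crucial observation is that $h$ is homogeneous of degree $n-1$ in $x_0,\ldots,x_{n-1}$ and is again of exactly the shape prescribed in the theorem, now with $n$ replaced by $n-1$ and ambient space $\Pbb^{n-1}$. Applying Lemma~\ref{special} with $k=n-1$, the hypersurface $Z(F)\subseteq\Pbb^n$ is $\Lbb$-rational if and only if $Z(h)\subseteq\Pbb^{n-1}$ is, and the latter is $\Lbb$-rational by the inductive hypothesis; this closes the induction.

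The one point that needs genuine care — and it is the only obstacle I anticipate — is the index bookkeeping verifying that $h$ really satisfies the hypothesis of the theorem at level $n-1$: one must check that the $i=n$ term collapses to the bottom piece $g_{n-1}(x_0,\ldots,x_{n-2})$ while the terms $i=5,\ldots,n-1$ reassemble into the prescribed sum. The degenerate cases cause no trouble: if $h\equiv 0$ then $Z(F)$ is a cone, and if $g_n\equiv 0$ then $Z(h)=\Pbb^{n-1}$, so Lemma~\ref{special} still applies in both. Everything else is immediate from Lemma~\ref{special}, Theorem~\ref{cubic} and Corollary~\ref{cone}. Finally, note that $\deg F=n\leq m$, so this theorem is indeed a special case of Conjecture~\ref{CW}.
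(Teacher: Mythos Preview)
Your proof is correct and follows exactly the same approach as the paper: reduce the case $m>n$ to $m=n$ via Corollary~\ref{cone}, then induct on $n$ by factoring out $x_n$ and applying Lemma~\ref{special}, with the base case $n=4$ handled by Corollary~\ref{quartic}. Your aside that ``if $g_n\equiv 0$ then $Z(h)=\Pbb^{n-1}$'' is misstated, but Lemma~\ref{special} applies verbatim even when $f_{k+1}\equiv 0$, so the argument is unaffected.
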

\begin{proof}
When $m>n$, not all coordinates of $\Pbb^m$ appear in $F$. In this case $F$ defines a cone in $\Pbb^m$ and the $\Lbb$-rationality of this hypersurface follows from lemma~\ref{cone}. When $m=n$, Rewrite the polynomial as
\begin{multline*}
F=x_n[x_{n-1}\ldots x_4 f_3(x_0,x_1,x_2,x_3) + \sum_{i=5}^{n-1}x_{n-1}\ldots x_{i}g_{i-1}(x_0,\ldots,x_{i-2})\\+g_{n-1}(x_0,\ldots,x_{n-2})] + g_n(x_0,\ldots,x_{n-1}).
\end{multline*}
Then the proof follows by induction, lemma~\ref{special} and corollary~\ref{quartic}. 
\end{proof}

\begin{theorem}
If the equation of the hypersurface of degree at most $n$ in $\Pbb^n \ (n\geqslant 4)$ has degree 1 in all variables except at most 4 variables, then this hypersurface is $\Lbb$-rational.
\end{theorem}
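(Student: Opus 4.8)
The plan is to imitate the proof of the preceding theorem, peeling off one ``linear'' variable at a time by means of Lemma~\ref{special} and thereby lowering the ambient dimension and the degree of the defining equation together, until we reach a hypersurface in $\Pbb^3$, where Proposition~\ref{lineareqs}, Proposition~\ref{quadratic} and Theorem~\ref{cubic} dispose of the linear, quadric and cubic cases respectively. After a linear change of coordinates we may assume that the (at most four) variables in which $F$ is allowed to have degree $\geq 2$ are $x_0,x_1,x_2,x_3$, so that $F$ has degree at most $1$ in each of $x_4,\ldots,x_n$. We then argue by induction on $n\geq 3$: the case $n=3$ carries no constraint at all (four variables being all of them), so it is exactly the statement that every hypersurface of degree $\leq 3$ in $\Pbb^3$ is $\Lbb$-rational, which follows from the three results just quoted. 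Throughout we take the degree of the equation to be at least $1$, so that we are genuinely dealing with hypersurfaces.

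For the inductive step assume $n\geq 4$ and set $d=\deg F\leq n$. Since $F$ has degree at most $1$ in $x_n$, we may write $F=x_n\,f_{d-1}(x_0,\ldots,x_{n-1})+f_d(x_0,\ldots,x_{n-1})$ with $f_{d-1}$ and $f_d$ homogeneous of degrees $d-1$ and $d$ and free of $x_n$. If $d=1$ then $X$ is a hyperplane and we are done. If $f_{d-1}\equiv 0$ then $F$ does not involve $x_n$ at all, so $X$ is a cone with vertex $(0:\cdots:0:1)$ and is $\Lbb$-rational by Corollary~\ref{cone}. In the remaining case $d\geq 2$ and $f_{d-1}$ is a non-constant homogeneous polynomial, so Lemma~\ref{special} applies and shows that $X$ is $\Lbb$-rational if and only if $Z(f_{d-1})\subseteq\Pbb^{n-1}$ is. Now $f_{d-1}$ is homogeneous of degree $d-1\leq n-1$ and still has degree at most $1$ in each of $x_4,\ldots,x_{n-1}$, so $Z(f_{d-1})\subseteq\Pbb^{n-1}$ is of exactly the kind covered by our induction (with $n-1\geq 3$), hence $\Lbb$-rational; therefore so is $X$.

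There is no serious obstacle once Lemma~\ref{special} is available; the argument is essentially bookkeeping, and its only delicate points are the following. First, one must extract the $x_n$-linear part of $F$ so that the coefficient $f_{d-1}$ really is homogeneous of degree $d-1$, which is automatic because $F$ is homogeneous. Second, the degenerate cases $f_{d-1}\equiv 0$ and $d=1$ have to be handled by Corollary~\ref{cone} and by the hyperplane observation rather than by Lemma~\ref{special}, since that lemma would otherwise ask about an empty ``cone'' when the coefficient is a nonzero constant. Third, one must check that the inequality (degree) $\leq$ (ambient dimension) survives the reduction, which it does because both sides drop by exactly one. All of the genuine content is concentrated in the base case $n=3$, that is, in Theorem~\ref{cubic}.
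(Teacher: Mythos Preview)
Your argument is correct and follows essentially the same route as the paper: peel off a linear variable $x_n$ via Lemma~\ref{special}, reducing the ambient dimension and the degree simultaneously, and fall back on Corollary~\ref{cone} when that variable is absent. The only cosmetic difference is that you anchor the induction at $n=3$ (where the linearity hypothesis is vacuous and the base case is exactly Proposition~\ref{lineareqs}, Proposition~\ref{quadratic}, and Theorem~\ref{cubic}), whereas the paper anchors it at $n=4$ and unpacks one application of Lemma~\ref{special} by hand there; your choice is arguably cleaner, and your explicit handling of the degenerate cases $d=1$ and $f_{d-1}\equiv 0$ is a welcome bit of care that the paper leaves implicit.
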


\begin{proof}
Suppose the 4 possibly non-linear variables are $x_0,x_1,x_2,$ and $x_3$. Proceed the proof by an induction on $n$. When $n=4$, the equation of the hypersurface is either $x_4 f_k(x_0,x_1,x_2,x_3)+f_{k+1}(x_0,x_1,x_2,x_3)=0 \ (k\leqslant 3)$ or $f_k(x_0,x_1,x_2,x_3)=0 \ (k\leqslant 4)$. Since we have checked the $\Lbb$-rationality of cubic surfaces, quadratic hypersurfaces, the $\Lbb$-rationality of such hypersurfaces are guaranteed by lemma~\ref{special} or corollary~\ref{cone}. When $n>4$, if the equation of the hypersurface is written only in terms of $x_0,x_1,x_2,x_3$, then it is a cone. Otherwise, let $x_n$ be one of its linear variables. Then the equation of the hypersurface is $x_n f_k(x_0,\ldots, x_{n-1})+f_{k+1}(x_0,\ldots,x_{n-1})$ where $f_k$ is again linear in all variables except at most 4 variables. So the $\Lbb$-rationality follows from lemma~\ref{special} and the induction hypothesis. 
\end{proof}

\begin{remark}
Theorem~4.2 generalizes Corollary 3.3 from \cite{MR2775124}, since the equations of the ``graph hypersurfaces'' considered there are linear in all variables, they must be $\Lbb$-rational by Theorem~4.2.
\end{remark}

\begin{example}[Affine Potts model hypersurface]
The previous theorem also gives an easy way to calculate the modulo $\Lbb$ class of the affine Potts model hypersurfaces appearing in \cite{aluffimarcolli1} definition (2.2) equation (2.5). The equations for such affine hypersurfaces are
$$ Z_G(q,t)=\sum_{G' \subseteq G'}q^{k(G)} \prod_{e \in E(G')}t_e,$$
 where $G'$ is a subgraph of $G$, $k(G')$ and $E(G')$ are the number of connected components and the set of edges of the graph $G'$ respectively. Fix $q$ in this equation and denote by $n$ the number of edges of the graph $G$, then this equation defines the affine Potts model hypersurface in $\Abb^n$. Its class in $K_0(Var)$ is congruent to 1 modulo $\Lbb$ if $n$ is odd and congruent to $-1$ modulo $\Lbb$ when $n$ is even. 

The calculation goes as follows. These hypersurfaces are defined by inhomogoneous polynomials of degree $n$ in $\Abb^n$, linear in all variables, where $n$ is the number of the edges of the graph in consideration. Homogenize the equation and write it as $F=0$, then clearly $F$ is a homogeneous polynomial of degree $n$ linear in all variables except the variable $x_0$ introduced in homogenizing. Now the class of the affine Potts model hypersurface is $[Z(F)]-[Z(x_0, x_1x_2\ldots x_{n})]$. $Z(F)$ is $\Lbb$-rational by the previous theorem, and now we are left to calculate $[Z(x_0, x_1x_2\ldots x_{n})]$ which is the class of the union of $n$ hyperplanes in $\Pbb^{n-1}$.

Let $x_1,\ldots, x_{n}$ be the projective coordinates of $\Pbb^{n-1}$. Consider the complement of $Z(x_1x_2\ldots x_{n})$, which can be explicitly expressed as points $(x_1:\cdots :x_{n})$ such that all projective coordinates are nonzero. We see this affine open set is isomorphic to the $(n-1)$-fold cartesian product of $(\Abb^1 - \{pt\})$. So the class of the union of $n$ hyperplanes in $\Pbb^{n-1}$ equals $[\Pbb^{n-1}]- (\Lbb -1)^{n-1}$. Taking into account that $[\Pbb^{n-1}]= 1+\Lbb +\cdots +\Lbb^{n-1}$, we see this class is $1+(-1)^n$ modulo $\Lbb$. We conclude the class of the affine Potts model hypersurface is $\Lbb$-rational when $n$ is odd and congruent to $-1$ modulo $\Lbb$ when $n$ is even.  
\end{example} 

%

\bibliographystyle{alpha}
\bibliography{liaobib}
  
\end{document}